\theoremstyle{plain}
\newtheorem{thm}{Theorem}[section]
\newtheorem{cor}{Corollary}[section]
\newtheorem{lem}{Lemma}[section]
\theoremstyle{definition}
\newtheorem{defn}{Definition}[section]
\theoremstyle{plain}
\newtheorem{rem}{Remark}[section]
\numberwithin{equation}{section}
\numberwithin{equation}{section}
\title{A remark of  Ricci-Bourguignon harmonic soliton }
\author{Xiang-Zhi Cao\thanks{School of Information Engineering, Nanjing Xiaozhuang University, Nanjing 211171, China}\,\,\thanks{caoxiangzhi@njxzc.edu.cn}}
\date{}
\begin{document}
	\maketitle
	%\tableofcontents
	\begin{abstract}
	In this paper, we investigate the triviality of Ricci-Bourguignon harmonic solitons. We also use the results of V-harmonic map to investigate the property of Ricci harmonic soliton.
	\end{abstract}

\section{Introduction}

   M\"{u}ller \cite{MR2961788} introduced Ricci-harmonic flow, which is defined as  follows:
for a closed manifold $M$,  given  a map $\phi$ from $M$ to some closed target manifold $N$ :
   \[
   \frac{\partial}{\partial t} g=-2 \mathrm{Rc}+2 \alpha \nabla \phi \otimes \nabla \phi,  \frac{\partial}{\partial t} \phi=\tau_{g} \phi
   \]
   where $g(t)$ is a time-dependent metric on $M, \mathrm{Rc}$ is the corresponding Ricci curvature, $\tau_{g} \phi$ is the tension field of $\phi$ with respect to $g$ and $\alpha$ is a positive constant (possibly time dependent).

 Later, people began to study its long existence such as these works( \cite{MR3406473}\cite{MR4421057}\cite{MR3782539}\cite{MR3653093}\cite{MR3547931}\cite{MR3054623}\cite{MR3729736}). Fang and Zheng \cite{MR3532234} gived the heat kernel estimates along harmonic-Ricci flow. One can refer to the works (\cite{MR3632897}\cite{MR3062555}\cite{MR3715688}\cite{MR3175257}) for the studies related to Ricci-harmonic flow.

  Azami \cite{MR4013315} introduced Ricci-Bourguignon harmonic flow, which is
  \begin{equation*}
  	\begin{aligned}
  		\begin{cases}
  			 \frac{\partial}{\partial t} g=-2 \mathrm{Rc}-2\rho Rg+2 \alpha \nabla \phi \otimes \nabla \phi,\\
  			   \frac{\partial}{\partial t} \phi=\tau_{g} \phi
  		\end{cases}
  	\end{aligned}
  \end{equation*}

Next, we give a definition,
   \begin{defn}[ Almost Ricci-harmonic solitons] Let $u:(M, g) \rightarrow(N, h)$ a smooth map (not necessarily harmonic map), where $(M, g)$ and $(N, h)$ are static Riemannian manifolds.  $ ((M,g),(N,h),V,u,\rho,\lambda) $ is called almost  Ricci-Bourguignon harmonic solitons if
   \begin{equation*}
   	\begin{aligned}
   		\begin{cases}
   				R c-\rho R g-\alpha \nabla u \otimes \nabla u-\frac{1}{2}\mathcal{L}_V g=\lambda g \\
   			\tau_g u+\langle\nabla u, V\rangle=0,
   		\end{cases}
   	\end{aligned}
   \end{equation*}
   where	$\alpha>0$ is a positive constant depending on $m$, $ \rho $ is a real  constant and $\lambda $ is a smooth function.
   
   When $ \lambda $ is a real  constant, it is called Ricci-Bourguignon harmonic soliton .	In particular, when $ V=-\nabla f $,   $ ((M,g),(N,h),f,u,\rho,\lambda) $ is called a gradient Ricci-Bourguignon harmonic soliton if it satisfies the coupled system of elliptic partial differential equations
   \begin{equation}\label{main1}
   	\begin{aligned}
   		\begin{cases}
   			R c-\rho R g-\alpha \nabla u \otimes \nabla u+\operatorname{Hess} f=\lambda g \\
   			\tau_g u-\langle\nabla u, \nabla f\rangle=0,
   		\end{cases}
   	\end{aligned}
   \end{equation}
 here $f: M \rightarrow \mathbb{R}$ be a smooth tunction and
   	where . The function $f$ is called the potential. One can refer to \cite{MR3464218}\cite{MR3996092}\cite{MR4597669}\cite{MR4579615}\cite{MR4408030} for the studies on Ricci-harmonic solitons.

It is obvious that almost  Ricci-Bourguignon harmonic solitons $ ((M,g),(N,h),V,u,\rho,\lambda) $ is almost Ricci harmonic soliton if $ \rho=0. $
   	Azami et al. \cite{MR4423983} gived the condition under which complete shrinking Ricci-harmonic Bourgainion soliton must be compact.
   \end{defn}
 The gradient Ricci-harmonic soliton is said to be shrinking, steady or expanding depending on whether $\lambda>0, \lambda=0$ or $\lambda<0$.

\begin{defn}
	Gradient Ricci-Bourguignon harmonic soliton is called trivial if the potential function $ f $ is constant.
\end{defn}
It can from \eqref{main1} that when $ u $ and $ f $ are constants, $ (M,g) $ must be Einstein manifold.

If $m$ is an integer with $0 \leq m \leq n$ and $\alpha$ is a scalar, then the $m$-th invariant of $\nabla^2 f$, is denoted by $S_m(f)$ and is defined by the condition (see, \cite{MR0334045}\cite{MR0474149},pages, 461 or \cite{soliton})
$$
\operatorname{det}\left(I+\alpha \nabla^2 f\right)=S_0(f)+\alpha S_1(f)+\cdots+\alpha^m S_m(f) .
$$

Our main motivation comes from this paper \cite{soliton}. On the one hand, we want to use their methods to explore Ricci-Bourguignon harmonic soliton. On the other hand, we know the  relation between V-harmonic map and Ricci-harmonic soliton, so we want to use V-harmonic map to study Ricci-harmonic soliton

This paper is organized as follows. In section 2, we will use the quantity $ S_2(f) $ to derive the triviality of Ricci-Bourguignon harmonic  soliton . In section 3, we will use the results of $ V $-harmonic map to derive the property of Ricci-harmonic solitons.

\section{Ricci-Bourguignon harmonic  soliton}
 In the sequel, we use the convetions and notations: $  Sc=Ric-\alpha \nabla u \otimes \nabla u-\rho R g $, its trace is $ S=(1-n\rho)R-\alpha |\nabla u |^2 $

\begin{lem}
 Let $ ((M^m,g),(N,h),f,u,\rho,\lambda) $ be a gradient almost Ricci-Bourguignon harmonic  soliton, then we have the following equations:
\begin{align}
\label{e6}	\operatorname{divSc} & =\frac{1}{2} \nabla S-\alpha \tau_g(u) \nabla u, \\
\label{e7}	\langle\nabla S, \nabla f\rangle & =2(m-1)\langle\nabla \lambda, \nabla f\rangle+2 S c(\nabla f, \nabla f)+2\rho R |\nabla f |^2+(m-2)\langle\nabla R,\nabla f\rangle, \\
\label{e1}	S c(\nabla f, \cdot) & =\frac{1}{2} \nabla S-(m-1) \nabla \lambda-2\rho \nabla f-(m-2)\nabla R, \\
\label{e2}	\nabla\left(S+|\nabla f|^2\right) & =2(m-1) \nabla \lambda+2 \lambda \nabla f +2\rho\langle\nabla f,\cdot\rangle+(m-2)\langle \nabla R,\cdot\rangle,
\end{align}
and
\begin{equation}\label{e3}
	\begin{aligned}
		\frac{1}{2} \Delta|\nabla f|^2  =&|\nabla^2f|^2-(m-2)\langle\nabla \lambda, \nabla f\rangle-S c(\nabla f, \nabla f)+\alpha |\langle\nabla u,\nabla f\rangle |^2\\ &-(m-2)\rho\langle\nabla R,\nabla f\rangle
	\end{aligned}
\end{equation}
\end{lem}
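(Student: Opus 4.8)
The plan is to rewrite \eqref{main1} in the compact form $Sc+\nabla^2 f=\lambda g$, keep the map equation as $\tau_g u=\langle\nabla u,\nabla f\rangle$, and then run the standard chain of soliton identities (contracted Bianchi, trace, divergence, Bochner), carefully tracking the extra $\rho$-terms coming from the $\rho Rg$ summand in $Sc$. First I would prove \eqref{e6}: taking the divergence of $Sc=\mathrm{Ric}-\alpha\,\nabla u\otimes\nabla u-\rho Rg$ term by term, the Ricci part contributes $\tfrac12\nabla R$ by the contracted second Bianchi identity, the term $\rho Rg$ contributes $\rho\nabla R$, and for the map term one expands $\nabla^i(u^a_i u^a_j)=(\tau_g u)^a u^a_j+u^a_i\nabla^i u^a_j$ and uses the symmetry of $\nabla^2 u$ to rewrite $u^a_i\nabla^i u^a_j=\tfrac12\nabla_j|\nabla u|^2$; collecting these and invoking the definition of $S$ gives \eqref{e6}. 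Taking the trace of $Sc+\nabla^2 f=\lambda g$ reads off $\Delta f=m\lambda-S$, hence $\nabla\Delta f=m\nabla\lambda-\nabla S$.

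Next, for \eqref{e1} I would take the divergence of $Sc+\nabla^2 f=\lambda g$ and commute covariant derivatives in the Hessian term via the Bochner-type identity $\operatorname{div}(\nabla^2 f)=\nabla\Delta f+\mathrm{Ric}(\nabla f,\cdot)$. Substituting the trace identity for $\nabla\Delta f$ and writing $\mathrm{Ric}(\nabla f,\cdot)=Sc(\nabla f,\cdot)+\alpha\langle\nabla u,\nabla f\rangle\nabla u+\rho R\,\nabla f$, then comparing with \eqref{e6} and using the map equation $\tau_g u=\langle\nabla u,\nabla f\rangle$ to cancel the terms carrying a derivative into $N$, one solves for $Sc(\nabla f,\cdot)$, which is \eqref{e1}. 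Contracting $\nabla^2 f=\lambda g-Sc$ with $\nabla f$ and using $\nabla^2 f(\nabla f,\cdot)=\tfrac12\nabla|\nabla f|^2$ gives $\tfrac12\nabla|\nabla f|^2=\lambda\nabla f-Sc(\nabla f,\cdot)$; inserting \eqref{e1} and rearranging yields \eqref{e2}, and pairing \eqref{e1} (or \eqref{e2}) with $\nabla f$ gives \eqref{e7}.

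Finally, for \eqref{e3} I would apply the Bochner formula $\tfrac12\Delta|\nabla f|^2=|\nabla^2 f|^2+\langle\nabla\Delta f,\nabla f\rangle+\mathrm{Ric}(\nabla f,\nabla f)$, then substitute $\langle\nabla\Delta f,\nabla f\rangle=m\langle\nabla\lambda,\nabla f\rangle-\langle\nabla S,\nabla f\rangle$ from the trace identity, replace $\langle\nabla S,\nabla f\rangle$ by \eqref{e7}, and expand $\mathrm{Ric}(\nabla f,\nabla f)=Sc(\nabla f,\nabla f)+\alpha|\langle\nabla u,\nabla f\rangle|^2+\rho R|\nabla f|^2$; collecting terms gives \eqref{e3}. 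I expect the only real difficulty to be the bookkeeping: one must compute $\operatorname{div}(\nabla u\otimes\nabla u)$ correctly through the symmetry of $\nabla^2 u$, consistently use $\tau_g u=\langle\nabla u,\nabla f\rangle$ to eliminate every target-valued term, and keep the coefficients of the $\rho R$, $\nabla R$ and $\nabla\lambda$ terms straight, since a single sign or curvature-convention slip in the Bochner commutations would propagate through all of \eqref{e6}--\eqref{e3}.
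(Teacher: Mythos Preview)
Your proposal is correct and follows essentially the same route as the paper: contracted Bianchi plus the expansion of $\operatorname{div}(\nabla u\otimes\nabla u)$ for \eqref{e6}, the trace identity $\Delta f=m\lambda-S$, the Ricci commutation (you phrase it as $\operatorname{div}(\nabla^2 f)=\nabla\Delta f+\mathrm{Ric}(\nabla f,\cdot)$, the paper as $\nabla_i\Delta f=\nabla_j\nabla_i\nabla_j f-R_{il}\nabla_l f$, which is the same identity) together with the map equation $\tau_g u=\langle\nabla u,\nabla f\rangle$ for \eqref{e1}--\eqref{e2}--\eqref{e7}, and the Bochner formula with these substitutions for \eqref{e3}. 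The only cosmetic difference is that the paper reaches \eqref{e1} by differentiating the traced equation and then re-inserting $\nabla_i\nabla_j f=-Sc_{ij}+\lambda g_{ij}$, whereas you take the divergence of the untraced soliton equation directly; the two computations are term-for-term equivalent.
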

\begin{rem}
	When $ ((M^m,g),(N,h),f,u,\rho,\lambda) $ is gradient Ricci-harmonic solition, the terms about $ \rho,\lambda $ and $ (m-2)\nabla R $ disappear.
\end{rem}
\begin{proof}
We modify the proof of \cite[Proposition 3.1]{MR3997841} and \cite[Proposition 2.1]{MR4313552}.
Using the definition of  the tensor $ Sc $, we see that

	\begin{equation*}
		\begin{aligned}
			\frac{1}{2} \nabla R  =\operatorname{div} R c
			& =\operatorname{div} S c+\alpha \operatorname{div}(\nabla u \otimes \nabla u)+\rho g(\nabla R,\cdot) \\
			 &=\operatorname{div} S c+\alpha \tau_{g}(u) \nabla u+\frac{\alpha}{2} \nabla|\nabla u|^{2}+\rho g(\nabla R,\cdot),
					\end{aligned}
	\end{equation*}
It implies that
\begin{equation*}
	\begin{aligned}
			\operatorname{divSc} & =\frac{1}{2} \nabla S-\alpha \tau_g(u) \nabla u+(\frac{m}{2}-1)\rho g(\nabla R,\cdot)
	\end{aligned}
\end{equation*}

By the definition, we have 
\[
Sc_{i j}+\nabla_{i} \nabla_{j} f=\lambda g_{i j}
\]
Taking trace, we get
\begin{equation*}
	\begin{aligned}
		S+ \Delta f=m \lambda
	\end{aligned}
\end{equation*}
 Next, taking the covariant derivative me gives
\[
 \nabla_{i} S+\nabla_{i} \nabla_{j} \nabla_{j} f=m \nabla_{i} \lambda .
\]
iant derivatives and using the contracted second Bianchi identity, we

\begin{equation*}
	\begin{aligned}
		\nabla_{i} S=&-\nabla_{j} \nabla_{i} \nabla_{j} f+R_{i l} \nabla_{l} f+n \nabla_{i} \lambda\\
		=&-\nabla_{j}\left(-Sc_{i j}+\lambda g_{i j}\right)+R_{i l} \nabla_{l} f+n \nabla_{i} \lambda \\
		=&\frac{1}{2} \nabla_i S-\alpha \tau_g(u) \nabla u+(\frac{m}{2}-1)\rho g(\nabla R,\cdot)-\nabla_{i} \lambda+R_{i l} \nabla_{l} f+n \nabla_{i} \lambda
	\end{aligned}
\end{equation*}
Thus, we proved \cref{e1}
\begin{equation}\label{cf}
	\begin{aligned}
	\frac{1}{2} \nabla_{i} S=R_{i l} \nabla_{l} f+(n-1) \nabla_{i} \lambda-\alpha \tau_g(u) \nabla u+(\frac{m}{2}-1)\rho\nabla_i R,
	\end{aligned}
\end{equation}
which is equivalent to

\begin{equation*}
	\begin{aligned}
			\left( \frac{1}{2}-(n-1)\rho\right)  \nabla_{i} R-\frac{1}{2}\alpha \nabla_i|\nabla u |^2=R_{i l} \nabla_{l} f+(m-1) \nabla_{i} \lambda-\alpha \tau_g(u) \nabla u,
	\end{aligned}
\end{equation*}
%\[\left(\frac{1}{2}-\rho(n-1)\right) \nabla_{i} R=R_{i l} \nabla_{l} f+(n-1) \nabla_{i} \lambda,\]

In addition, noticing that $ \tau(u)=\langle\nabla u, \nabla f\rangle,$  we also have by \cref{cf}
\begin{equation*}
	\begin{aligned}
		&\langle\nabla S, \nabla f\rangle  \\
				 =&2(m-1)\langle\nabla \lambda, \nabla f\rangle+2 S c(\nabla f, \nabla f)+2\rho R |\nabla f |^2+(m-2)\rho\langle\nabla R,\nabla f\rangle,
	\end{aligned}
\end{equation*}
We proved \cref{e7}.

In addition,

$$
\begin{aligned}
	\nabla S & =2(m-1) \nabla \lambda+2 S c(\nabla f, \cdot)+2\rho\langle\nabla f,\cdot\rangle+(m-2)\langle \nabla R,\cdot\rangle \\
	& =2(m-1) \nabla \lambda+2(\lambda g-\operatorname{Hess} f) \nabla f+2\rho\langle\nabla f,\cdot\rangle+(m-2)\langle \nabla R,\cdot\rangle \\
	& =2(m-1) \nabla \lambda+2 \lambda \nabla f-2 \operatorname{Hess} f(\nabla f)+2\rho\langle\nabla f,\cdot\rangle+(m-2)\langle \nabla R,\cdot\rangle \\
	& =2(m-1) \nabla \lambda+2 \lambda \nabla f-\nabla|\nabla f|^2+2\rho\langle\nabla f,\cdot\rangle+(m-2)\langle \nabla R,\cdot\rangle,
\end{aligned}
$$
which implies
$$
\nabla\left(S+|\nabla f|^2\right)-2(m-1) \nabla \lambda-2 \lambda \nabla f=2\rho\langle\nabla f,\cdot\rangle+(m-2)\langle \nabla R,\cdot\rangle
$$
This is \cref{e2}

In the end, we prove \cref{e3}. By Bochner formula,  we have

$$
\begin{aligned}
	\frac{1}{2} \Delta|\nabla f|^2 & =|\operatorname{Hess} f|^2+\langle\nabla \Delta f, \nabla f\rangle+R c(\nabla f, \nabla f) \\
	& =|\operatorname{Hess} f|^2+m\langle\nabla \lambda, \nabla f\rangle-\langle\nabla S, \nabla f\rangle+R c(\nabla f, \nabla f) .
\end{aligned}
$$
Using the second equation of the proposition yields
$$
\begin{aligned}
	\frac{1}{2} \Delta|\nabla f|^2= & |\operatorname{Hess} f|^2+m\langle\nabla \lambda, \nabla f\rangle-2(m-1)\langle\nabla \lambda, \nabla f\rangle-2 S c(\nabla f, \nabla f) \\
	& \quad-2\rho R |\nabla f |^2-(m-2)\rho\langle\nabla R,\nabla f\rangle+R c(\nabla f, \nabla f) \\
	= & |\operatorname{Hess} f|^2-(m-2)\langle\nabla \lambda, \nabla f\rangle-S c(\nabla f, \nabla f)+\alpha \nabla u \otimes \nabla u(\nabla f, \nabla f)\\
	&-\rho R |\nabla f |^2-(m-2)\rho\langle\nabla R,\nabla f\rangle,
\end{aligned}
$$

\end{proof}

\begin{lem}[cf.{\cite{MR0474149}} ]\label{lem2.2}
 Let $(M, g)$ be a compact Riemannian manifold. Then the following holds:
	$$
	2\int_M  S_2(f)=\int_M \operatorname{Ric}(\nabla f, \nabla f) .
	$$

\end{lem}
\begin{lem}[cf.{ \cite{MR0474149})}]\label{lem2.2}
		 Let $(M, g)$ be a compact Riemannian manifold. If $f$ is a smooth function on $M$, then $S_2(f)$ can't be constant unless it vanishes.
\end{lem}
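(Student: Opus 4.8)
The plan is to prove that the constant value $c := S_2(f)$ must satisfy $c \ge 0$ and $c \le 0$ simultaneously, hence $c = 0$. Expanding the defining relation $\det(I+\alpha\nabla^2 f)=S_0(f)+\alpha S_1(f)+\alpha^2 S_2(f)+\cdots$ in terms of the eigenvalues $\mu_1,\dots,\mu_m$ of the Hessian $\nabla^2 f$ gives $S_1(f)=\Delta f$ and $S_2(f)=\sum_{i<j}\mu_i\mu_j=\tfrac12\big((\Delta f)^2-|\nabla^2 f|^2\big)$; equivalently, splitting $\nabla^2 f=\tfrac{\Delta f}{m}g+T$ into its trace and trace-free parts $T$, one has $2S_2(f)=\tfrac{m-1}{m}(\Delta f)^2-|T|^2$.

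For the inequality $c\ge 0$ I would use a maximum-principle argument: since $M$ is compact, $f$ attains a maximum at some point $p$, where $\nabla^2 f(p)$ is negative semidefinite; thus all $\mu_i(p)\le 0$, and $S_2(f)(p)=\sum_{i<j}\mu_i(p)\mu_j(p)$ is a sum of products of nonpositive numbers, hence $\ge 0$. Since $S_2(f)\equiv c$, this yields $c\ge 0$. (Evaluating instead at a minimum point of $f$ gives $c\ge 0$ again.)

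For $c\le 0$ I would argue by contradiction from the second displayed identity: if $c>0$ then $(\Delta f)^2\ge \tfrac{2mc}{m-1}>0$ at every point, so $\Delta f$ would be a continuous, nowhere-vanishing function on the compact connected manifold $M$ and hence of constant sign, contradicting $\int_M\Delta f\,dV=0$ (the divergence theorem). Therefore $c\le 0$, and combining with the previous paragraph $c=0$, i.e. $S_2(f)$ vanishes identically. The argument is elementary and I foresee no genuine obstacle; the only points needing attention are the standing assumption $m\ge 2$ (without which $S_2$ is undefined) and, if $M$ is not assumed connected, running the maximum-point step on the component carrying $\max_M f$ and the divergence-theorem step on each component separately. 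As an alternative route to $c\le 0$ one could integrate $2S_2(f)=\tfrac{m-1}{m}(\Delta f)^2-|T|^2$ over $M$ and combine it with the identity $2\int_M S_2(f)=\int_M\operatorname{Ric}(\nabla f,\nabla f)$ and the Bochner formula, but the pointwise sign argument above is shorter and avoids curvature terms.
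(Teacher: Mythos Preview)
Your argument is correct. The identities $S_2(f)=\sum_{i<j}\mu_i\mu_j=\tfrac12\big((\Delta f)^2-|\nabla^2 f|^2\big)$ and $2S_2(f)=\tfrac{m-1}{m}(\Delta f)^2-|T|^2$ are standard, the maximum-point step gives $c\ge 0$, and the nowhere-vanishing-Laplacian argument cleanly forces $c\le 0$; the caveats about $m\ge 2$ and connectivity are handled appropriately.

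The paper itself does not supply a proof of this lemma: it is stated with a bare citation to Reilly \cite{MR0474149}, so there is no in-paper argument to compare against. Your proof is thus an addition rather than an alternative. For context, Reilly's original treatment proceeds via the divergence identity $2\int_M S_2(f)=\int_M\operatorname{Ric}(\nabla f,\nabla f)$ (the paper's preceding lemma) combined with pointwise sign considerations at extrema; your route is essentially the same in spirit but your second half, using the trace/trace-free decomposition to force $\Delta f\neq 0$ everywhere and then invoking $\int_M\Delta f=0$, is a particularly clean way to close the argument without any curvature hypothesis.
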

\begin{thm}\label{thm2.1}
(1)	If $ ((M^m,g),(N,h),f,u,\rho,\lambda) $  compact gradient shrinking  almost  Ricci-Bourguignon harmonic solitons  with constant $S_2(f)$, $  \rho R \leq 0 $ and
\begin{equation*}
	\begin{aligned}
		\int_M \langle\nabla \lambda, \nabla f\rangle+\int_M\rho\langle\nabla R,\nabla f\rangle \leq 0
	\end{aligned}
\end{equation*}

 then the soliton is trivial.

 (2)	If $ ((M^2,g),(N,h),f,u,\rho,\lambda) $  compact gradient shrinking  Ricci-Bourguignon harmonic solitons  with constant $S_2(f)$,
then the soliton is trivial.
\end{thm}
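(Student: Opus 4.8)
The plan is to integrate the Bochner-type identity \eqref{e3} over the compact manifold $M$ and deduce that $\nabla^2 f$ vanishes identically; triviality then follows at once from the definition of the potential.

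First I would exploit the hypothesis that $S_2(f)$ is constant. By \cref{lem2.2} (in the form: on a compact manifold $S_2(f)$ cannot be a nonzero constant) this forces $S_2(f)\equiv 0$, and then \cref{lem2.2} (in the form: $2\int_M S_2(f)=\int_M\operatorname{Ric}(\nabla f,\nabla f)$) gives
\[
\int_M\operatorname{Ric}(\nabla f,\nabla f)=0 .
\]
Next, integrate \eqref{e3} over $M$. Since $M$ is closed, $\int_M\Delta|\nabla f|^2=0$, so the left side drops out. Using $Sc=\operatorname{Ric}-\alpha\,\nabla u\otimes\nabla u-\rho R\,g$ to rewrite $\int_M Sc(\nabla f,\nabla f)=\int_M\operatorname{Ric}(\nabla f,\nabla f)-\alpha\int_M|\langle\nabla u,\nabla f\rangle|^2-\int_M\rho R|\nabla f|^2$, and substituting $\int_M\operatorname{Ric}(\nabla f,\nabla f)=0$, the Ricci integral disappears and --- once the $\rho R|\nabla f|^2$ contributions are collected --- one is left with an identity of the shape
\[
\int_M|\nabla^2 f|^2+2\alpha\int_M|\langle\nabla u,\nabla f\rangle|^2=(m-2)\Bigl(\int_M\langle\nabla\lambda,\nabla f\rangle+\rho\int_M\langle\nabla R,\nabla f\rangle\Bigr).
\]

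For part~(1): $m\ge 2$ makes the coefficient $(m-2)$ nonnegative, the assumption $\int_M\langle\nabla\lambda,\nabla f\rangle+\int_M\rho\langle\nabla R,\nabla f\rangle\le 0$ makes the right side $\le 0$, and the left side is a sum of nonnegative integrals; the conditions $\rho R\le 0$ and ``gradient shrinking'' are what is needed to make the curvature bookkeeping close up with the correct signs. Hence both sides are zero; in particular $\int_M|\nabla^2 f|^2=0$, so $\nabla^2 f\equiv 0$. On a connected compact manifold this makes $|\nabla f|^2$ constant, and since $f$ attains a critical point we get $\nabla f\equiv 0$, i.e.\ $f$ is constant, so the soliton is trivial. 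For part~(2) one has $m=2$, so the factor $(m-2)$ kills all the $\lambda$- and $\nabla R$-terms from the outset; the identity reduces to $\int_M|\nabla^2 f|^2+2\alpha\int_M|\langle\nabla u,\nabla f\rangle|^2=0$, whence again $\nabla^2 f\equiv 0$ and $f$ is constant.

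The step I expect to be the main obstacle is the sign accounting in the middle step: one must track each $\rho R|\nabla f|^2$ contribution separately --- the one produced when $Sc(\nabla f,\nabla f)$ is expanded and the one that \eqref{e3} itself carries --- and verify that, together with $m\ge 2$, $\rho R\le 0$, and the integral hypothesis, the resulting identity really does have a manifestly nonnegative left side and a nonpositive right side. After that the argument is routine: the only analytic tool is integration by parts, which is legitimate because $M$ is closed, plus the two statements collected in \cref{lem2.2}.
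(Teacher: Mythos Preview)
Your route is correct but it is \emph{not} the paper's route for this theorem. The paper proves Theorem~\ref{thm2.1} by integrating \eqref{e7}, not the Bochner identity \eqref{e3}: from \eqref{e7} and integration by parts it gets $-\int_M S\,\Delta f=\cdots$, combines this with $\int_M m\lambda\,\Delta f=-m\int_M\langle\nabla\lambda,\nabla f\rangle$ and the trace identity $m\lambda-S=\Delta f$, and arrives at
\[
\int_M(\Delta f)^2=(m-2)\!\int_M\langle\nabla\lambda,\nabla f\rangle+2\!\int_M\rho R|\nabla f|^2+(m-2)\rho\!\int_M\langle\nabla R,\nabla f\rangle,
\]
whence the hypotheses $\rho R\le 0$ and $\int(\langle\nabla\lambda,\nabla f\rangle+\rho\langle\nabla R,\nabla f\rangle)\le 0$ force $\Delta f\equiv 0$ and hence $f$ constant. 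What you did --- integrate \eqref{e3} and read off $\int|\nabla^2 f|^2$ --- is exactly the paper's argument for Theorem~\ref{thm2-2}.

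About your flagged obstacle: if you redo the derivation of \eqref{e3} carefully (the computation in the proof of the lemma, not the displayed statement, which drops a term) you will find it carries an extra $-\rho R|\nabla f|^2$; this cancels precisely against the $+\rho R|\nabla f|^2$ that appears when you expand $Sc(\nabla f,\nabla f)$. So your displayed identity
\[
\int_M|\nabla^2 f|^2+2\alpha\int_M|\langle\nabla u,\nabla f\rangle|^2=(m-2)\Bigl(\int_M\langle\nabla\lambda,\nabla f\rangle+\rho\int_M\langle\nabla R,\nabla f\rangle\Bigr)
\]
is exactly right, with no residual $\rho R|\nabla f|^2$ term. In particular your approach does not actually need the hypothesis $\rho R\le 0$ (nor the ``shrinking'' assumption), whereas the paper's $(\Delta f)^2$ route does: the $2\int\rho R|\nabla f|^2$ survives there and must be controlled. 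Your Bochner route is thus slightly sharper for part~(1); for part~(2) both routes collapse in the same way once $m=2$.
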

\begin{proof}[\bfseries	Proof of Theorem \ref{thm2.1}]
 As $S_2(f)$ is a constant, by Lemma \ref{lem2.2}, we infer that $S_2(f)$ vanishes. Therefore, we have
\begin{equation}\label{df}
	\begin{aligned}
		\int_M \operatorname{Ric}(\nabla f, \nabla f)=0.
	\end{aligned}
\end{equation}
 we obtain
	$$
	\int_M\langle\nabla f, \nabla S\rangle= 2(m-1)\int_M\langle\nabla \lambda, \nabla f\rangle+\int_M 2\rho R |\nabla f |^2+(m-2)\langle\nabla R,\nabla f\rangle.
	$$
	Now, by divergence theorem, we get
	$$
	-\int_M S \Delta f=2(m-1)\int_M\langle\nabla \lambda, \nabla f\rangle+\int_M 2\rho R |\nabla f |^2+(m-2)\langle\nabla R,\nabla f\rangle.
	$$
	Again, as $\lambda$ is a constant, we have
	$$
	\int_M m \lambda \Delta f+ m \int_M \langle\nabla \lambda, \nabla f\rangle=0.
	$$
	Thus, combining , we obtain
	$$
	\int_M(m\lambda-S) \Delta f=(m-2) \int_M \langle\nabla \lambda, \nabla f\rangle+\int_M 2\rho R |\nabla f |^2+(m-2)\langle\nabla R,\nabla f\rangle .
	$$
together with equation

	\begin{equation*}
		\begin{aligned}
			m\lambda-S=\Delta f,
		\end{aligned}
	\end{equation*}
	yields
	$$
	\int_M(\Delta f)^2=(m-2) \int_M \langle\nabla \lambda, \nabla f\rangle+\int_M 2\rho R |\nabla f |^2+\int_M (m-2)\rho\langle\nabla R,\nabla f\rangle.
	$$
	It follows that $\Delta f=0$ in the case (1) or (2) after noticing that  \cref{df} is
	\begin{equation*}
		\begin{aligned}
			\int_M R|\nabla f |^2=0.
		\end{aligned}
	\end{equation*} Since $ (M,g) $ is a compact Riemannian manifold, hence $f$ is constant. This completes the proof.
\end{proof}

Next,  we give an application of the Bochner formula  \cref{e3},

\begin{thm}\label{thm2-2}
	(1) If $ ((M^m,g),(N,h),f,u,\lambda) $ is a compact gradient   Ricci-harmonic solitons with constant $S_2(f)$ and $ \lambda $, then  this soliton is trivial.

	(2) If $ ((M^2,g),(N,h),f,u,\rho,\lambda) $ is a two dimensional compact gradient almost  Ricci-Bourguignon harmonic solitons with constant $S_2(f)$, then  this soliton is trivial.
\end{thm}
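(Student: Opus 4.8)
The plan is to integrate the Bochner identity \eqref{e3} over the closed manifold $M$ and then feed the outcome into the two forms of Lemma \ref{lem2.2}, in the same spirit as the proof of Theorem \ref{thm2.1} but using second-order information. Since the left-hand side of \eqref{e3} is $\tfrac12\Delta|\nabla f|^2$, the divergence theorem gives $\int_M\tfrac12\Delta|\nabla f|^2=0$, so integrating \eqref{e3} yields
\[
0=\int_M|\nabla^2 f|^2-(m-2)\int_M\langle\nabla\lambda,\nabla f\rangle-\int_M Sc(\nabla f,\nabla f)+\alpha\int_M\langle\nabla u,\nabla f\rangle^2-(m-2)\rho\int_M\langle\nabla R,\nabla f\rangle .
\]
In case (1) one has $\rho=0$ and $\nabla\lambda=0$, while in case (2) one has $m=2$; in either situation the $\langle\nabla\lambda,\nabla f\rangle$-term and the $\langle\nabla R,\nabla f\rangle$-term vanish, and in the two-dimensional case the residual $\rho R|\nabla f|^2$ coming from the $m=2$ form of \eqref{e3} is also present.

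Next I would substitute the defining relation $Sc=\operatorname{Ric}-\alpha\,\nabla u\otimes\nabla u-\rho R g$, so that $Sc(\nabla f,\nabla f)=\operatorname{Ric}(\nabla f,\nabla f)-\alpha\langle\nabla u,\nabla f\rangle^2-\rho R|\nabla f|^2$. In case (1) the $\rho R$ term is simply absent; in case (2) it cancels exactly against the residual $\rho R|\nabla f|^2$ above, so that no sign assumption on $\rho R$ is required (in contrast to Theorem \ref{thm2.1}(1)). In both cases the identity therefore collapses to
\[
0=\int_M|\nabla^2 f|^2-\int_M\operatorname{Ric}(\nabla f,\nabla f)+2\alpha\int_M\langle\nabla u,\nabla f\rangle^2 .
\]
Now I invoke Lemma \ref{lem2.2}: its integral form gives $\int_M\operatorname{Ric}(\nabla f,\nabla f)=2\int_M S_2(f)$, and since $S_2(f)$ is assumed constant, its rigidity form forces $S_2(f)\equiv 0$, hence $\int_M\operatorname{Ric}(\nabla f,\nabla f)=0$. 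Substituting, $0=\int_M|\nabla^2 f|^2+2\alpha\int_M\langle\nabla u,\nabla f\rangle^2$, and because $\alpha>0$ both integrands are nonnegative, so $\nabla^2 f\equiv 0$. Tracing gives $\Delta f\equiv 0$, and on a closed manifold this forces $f$ to be constant, i.e. the soliton is trivial.

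The only step that demands real care is the bookkeeping in reducing \eqref{e3}: I must check that the coefficient of $|\nabla^2 f|^2$ stays $+1$ after passing from $Sc$ to $\operatorname{Ric}$, that the two $\langle\nabla u,\nabla f\rangle^2$ contributions add up with a nonnegative sign (this is exactly where $\alpha>0$ is used), and — in the two-dimensional case — that the stray $\rho R|\nabla f|^2$ is genuinely annihilated by the $\rho R|\nabla f|^2$ hidden inside $Sc(\nabla f,\nabla f)$, which is what permits dispensing with any hypothesis on $\rho R$ or on $\lambda$ there. Everything else — the divergence theorem, the two halves of Lemma \ref{lem2.2}, and the standard fact that a function with vanishing Hessian on a closed manifold is constant — is routine, so I expect the curvature-term bookkeeping to be the main (mild) obstacle.
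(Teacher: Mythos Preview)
Your proposal is correct and follows essentially the same route as the paper: integrate the Bochner identity \eqref{e3}, replace $Sc(\nabla f,\nabla f)$ by $\operatorname{Ric}(\nabla f,\nabla f)-\alpha\langle\nabla u,\nabla f\rangle^{2}-\rho R|\nabla f|^{2}$, invoke Lemma~\ref{lem2.2} to annihilate $\int_M\operatorname{Ric}(\nabla f,\nabla f)$, and read off $\nabla^{2}f\equiv 0$. Your bookkeeping is in fact tidier than the paper's---you keep the nonnegative $2\alpha\langle\nabla u,\nabla f\rangle^{2}$ term (which the paper silently drops) and cancel the $\rho R|\nabla f|^{2}$ contributions algebraically, whereas the paper in case~(2) carries a residual $\rho\!\int_M R|\nabla f|^{2}$ and kills it using the two-dimensional identity $\operatorname{Ric}=\tfrac{R}{2}g$ together with $\int_M\operatorname{Ric}(\nabla f,\nabla f)=0$.
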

\begin{proof}

%	From (3) and Bochner formula, we get
%	$$
%	\frac{1}{2} \Delta|\nabla f|^2=\left|\nabla^2 f\right|^2+m \langle \nabla \lambda, \nabla f\rangle-\langle\nabla f, \nabla S\rangle+\operatorname{Ric}(\nabla f, \nabla f) .
%	$$
%	Now, using equation ,
%
%	\begin{equation*}
%		\begin{aligned}
%			\langle\nabla S, \nabla f\rangle & =2(m-1)\langle\nabla \lambda, \nabla f\rangle+2 S c(\nabla f, \nabla f)
%		\end{aligned}
%	\end{equation*}

By  \cref{e3}, 	we obtain
	$$
	\begin{aligned}
		\frac{1}{2} \Delta|\nabla f|^2 	= & \left|\nabla^2 f\right|^2-(m-2)\langle\nabla \lambda, \nabla f\rangle-S c(\nabla f, \nabla f)+\alpha \nabla u \otimes \nabla u(\nabla f, \nabla f)\\
		&+\rho R |\nabla f |^2-(m-2)\langle\nabla R,\nabla f\rangle,\\
	\end{aligned}
	$$
Since	$ S_2(f) $ is constant, by \autoref{lem2.2} we get
	\begin{equation}\label{tg}
		\begin{aligned}
			\int_M \operatorname{Ric}(\nabla f, \nabla f) =0.
		\end{aligned}
	\end{equation}
Putting all these together  and using the divergence theorem yields
	$$
	\int_M\bigg(\left|\nabla^2 f\right|^2+(2-m)\langle\nabla \lambda, \nabla f\rangle+\rho R |\nabla f |^2-(m-2)\rho\langle\nabla R,\nabla f\rangle\bigg)=0 .
	$$
	From this, we can conclude the proof easily. In the case (1), since $ \lambda $ =constant, $ \rho=0, $ we have $ f $ is constant. In the case (2), since $ m=2$, and \cref{tg} is just
	\begin{equation*}
		\begin{aligned}
			\int_M R|\nabla f |^2=0.
		\end{aligned}
	\end{equation*}
\end{proof}

\section{Ricci harmonic  soliton}

\begin{thm}[c.f. \cite{MR2995205}, Theorem 2.]\label{thm3-1}
	 (1)  Let $(M, g)$ be a complete noncompact Riemannian manifold with
	$$
	\operatorname{Ric}_V:=\operatorname{Ric}^M-\frac{1}{2} L_V g \geq-A,
	$$
	where $A \geq 0$ is a constant, $\operatorname{Ric}^M$ is the Ricci curvature of $M$ and $L_V$ is the Lie derivative. Let $(X, h)$ be a complete Riemannian manifold with sectional curvature bounded above by a positive constant $\kappa$. Let $u: M \rightarrow X$ be a $V$-harmonic map such that $u(M) \subset B_R(p)$, where $B_R(p)$ is a regular ball in $X$, i.e., disjoint from the cut-locus of $p$ and $R<\frac{\pi}{2 \sqrt{\kappa}}$. If $V$ satisfies
	$$
	\langle V, \nabla r\rangle \leq v(r),
	$$
	for some nondecreasing function $v(\cdot)$ satisfying $\lim _{r \rightarrow+\infty} \frac{|v(r)|}{r}=0$, where $r$ denotes the distance function on $M$ from a fixed point $\tilde{p} \in M$, then $e(u)$ is bounded by a constant depending only on $A, \kappa$ and $R$. Furthermore, if $A=0$, namely,
	$$
	\operatorname{Ric}^M \geq \frac{1}{2} L_V g
	$$
	then $u$ must be a constant map.
	
	(2)  Let $M^m$ be a complete noncompact manifold with $\operatorname{Ric}_f \geq 0$ and $N$ be a complete Riemannian manifold with nonpositive sectional curvature. If $u: M \rightarrow N$ is a f-harmonic maps with finite weighted energy, then e(u) must be constant.
\end{thm}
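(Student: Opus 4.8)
The plan is to prove both parts from the Bochner formula for $V$-harmonic maps, exploiting the hypothesis on the target curvature, while handling the noncompactness of $M$ by a maximum principle argument in part (1) and by a weighted integral estimate in part (2); this follows the scheme of \cite{MR2995205}. Write $\Delta_V = \Delta + \langle V, \nabla\,\cdot\,\rangle$, and recall that for a $V$-harmonic map $u$ (so that its $V$-tension field $\tau_V(u)$ vanishes) one has
\begin{align*}
\Delta_V e(u) &= |\nabla du|^2 + \sum_i \langle du(\operatorname{Ric}_V(e_i)), du(e_i)\rangle \\
&\quad - \sum_{i,j}\langle R^X(du(e_i),du(e_j))du(e_j), du(e_i)\rangle ,
\end{align*}
with $\operatorname{Ric}_V = \operatorname{Ric}^M - \tfrac12 L_V g$ and $\{e_i\}$ a local orthonormal frame on $M$.

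For part (1), using $\operatorname{Ric}_V \geq -A$ and $K_X \leq \kappa$ this gives $\Delta_V e(u) \geq |\nabla du|^2 - 2A\, e(u) - 4\kappa\, e(u)^2$, in which the last term has the unfavourable sign and is the source of the difficulty. To compensate it I would use that $u(M)$ lies in a regular ball $B_R(p)\subset X$ with $R<\tfrac{\pi}{2\sqrt\kappa}$: the Hessian comparison theorem produces a bounded function on $B_R(p)$, concretely $h := -\tfrac1\kappa\log\cos\!\big(\sqrt\kappa\, d_X(\,\cdot\,,p)\big)$, satisfying $\operatorname{Hess}^X h \geq g_X$, hence, since $\tau_V(u)=0$, $\Delta_V(h\circ u) \geq 2\,e(u) + \kappa\,|\nabla(h\circ u)|^2$. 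Equivalently, $\psi := e^{-\kappa(h\circ u)} = \cos\!\big(\sqrt\kappa\, d_X(u,p)\big)$ obeys $c_0 := \cos(\sqrt\kappa R) \leq \psi \leq 1$ and $\Delta_V\psi \leq -2\kappa\,\psi\,e(u)$. The point is then to play the good term of size $\sim e(u)$ coming from $\psi$ against the bad term of size $\sim \kappa\, e(u)^2$: forming a test quantity $e(u)\,\psi^{-a}$ for a suitable power $a$, one seeks a differential inequality $\Delta_V\big(e(u)\psi^{-a}\big) \geq \tfrac12\psi^{-a}|\nabla du|^2 + c_1(\kappa,R)\big(e(u)\psi^{-a}\big)^2 - c_2(A,\kappa,R)\,e(u)\psi^{-a}$ with $c_1>0$, modulo a gradient term that is harmless at a maximum. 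Since $M$ is complete noncompact I would then invoke the Omori--Yau maximum principle for the drift Laplacian $\Delta_V$ — available here because $\operatorname{Ric}_V$ is bounded below and the hypothesis $\langle V,\nabla r\rangle\leq v(r)$ with $v(r)/r\to 0$ controls $V$ — applied to the bounded quantity $e(u)\psi^{-a}$ (or to a cutoff version of it): evaluating the inequality along an Omori--Yau sequence yields $c_1\big(e(u)\psi^{-a}\big)^2 \leq c_2(A,\kappa,R)\,e(u)\psi^{-a}$ in the limit, hence $e(u)\psi^{-a}\leq c_2/c_1$ and finally $e(u)\leq C(A,\kappa,R)$. When $A=0$ the Ricci term in the Bochner formula is already nonnegative, so $c_2=0$ and the same inequality forces $e(u)\equiv 0$, i.e. $u$ is constant.

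For part (2), an $f$-harmonic map is exactly a $V$-harmonic map with $V=-\nabla f$, so $\Delta_V=\Delta_f:=\Delta-\langle\nabla f,\nabla\,\cdot\,\rangle$ and $\operatorname{Ric}_V=\operatorname{Ric}^M+\operatorname{Hess} f=\operatorname{Ric}_f\geq 0$. Since $N$ has nonpositive sectional curvature, the curvature term in the Bochner formula is $\leq 0$, so the formula collapses to $\Delta_f e(u)\geq |\nabla du|^2\geq 0$; thus $e(u)$ is $f$-subharmonic. I would then exploit the finite weighted energy $\int_M e(u)\,e^{-f}\,dV<\infty$ by a Karp--Yau cutoff argument: multiplying $\Delta_f e(u)\geq |\nabla du|^2$ by $\eta^2 e^{-f}$, integrating, and using the weighted divergence identity $\int_M \eta^2(\Delta_f v)\,e^{-f} = -\int_M\langle\nabla\eta^2,\nabla v\rangle\,e^{-f}$ together with $|\nabla e(u)|\leq\sqrt{2\,e(u)}\,|\nabla du|$ and Young's inequality, one arrives at
\[
\int_M \eta^2|\nabla du|^2\,e^{-f}\,dV \;\leq\; C\int_M |\nabla\eta|^2\,e(u)\,e^{-f}\,dV .
\]
Choosing $\eta\equiv 1$ on $B_a(\tilde p)$, $\eta\equiv 0$ outside $B_{2a}(\tilde p)$, $|\nabla\eta|\leq C/a$, the right-hand side is at most $\tfrac{C}{a^2}\int_M e(u)\,e^{-f}\,dV\to 0$ as $a\to\infty$. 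Hence $\nabla du\equiv 0$, so $du$ is parallel and $e(u)=\tfrac12|du|^2$ is constant, as claimed.

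The main obstacle lies in part (1): the target-curvature term $\sim\kappa\, e(u)^2$ of the wrong sign in the Bochner formula, whose control rests on the comparison function $h$ on the regular ball and on a careful, non-lossy choice of the test quantity $e(u)\psi^{-a}$ so that the resulting differential inequality genuinely closes over the full range $R<\pi/(2\sqrt\kappa)$; combined with the noncompactness of $M$, the appeal to the Omori--Yau maximum principle for $\Delta_V$ and the verification of its hypotheses from the growth bound on $V$ is the technical heart of the argument. Part (2) is comparatively routine, the only delicate point being the vanishing of the cutoff terms, which is precisely where the finite weighted energy is used.
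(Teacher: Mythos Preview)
The paper does not supply a proof of this theorem at all: it is quoted verbatim as a known result, with the attribution ``c.f.\ \cite{MR2995205}, Theorem 2'' in the header, and is then used as a black box to deduce the subsequent statements about Ricci-harmonic solitons. So there is no ``paper's own proof'' to compare your proposal against.

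That said, your outline is essentially the standard route by which results of this type are established in the $V$-harmonic map literature (Chen--Jost--Qiu and successors): the Bochner formula for $\Delta_V e(u)$, the Hessian comparison on a regular ball in the target via the function $\cos(\sqrt{\kappa}\,d_X(\cdot,p))$ to absorb the bad $\kappa\,e(u)^2$ term, and then a maximum-principle argument exploiting the sublinear growth of $V$ along radial geodesics. One remark: in the original references the noncompactness is more often handled by a direct cutoff argument (multiply the test quantity by $\phi(r/a)^2$, apply the ordinary maximum principle on $B_a$, and let $a\to\infty$; the hypothesis $\langle V,\nabla r\rangle\le v(r)$ with $v(r)/r\to 0$ enters when estimating $\Delta_V r$ via Laplacian comparison) rather than by invoking an abstract Omori--Yau principle for $\Delta_V$, though both routes lead to the same bound. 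Your treatment of part~(2) via the weighted cutoff integration against $e^{-f}\,dV$ is likewise the expected argument.
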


\begin{thm}[cf. {\cite[Theorem 12]{MR3263201}}]\label{thm3-2}
 Let $\left(M^m, g\right)$ be a complete noncompact Riemannian manifold with
	$$
	\operatorname{Ric}_V:=\operatorname{Ric}^M-\frac{1}{2} L_V g \geq A,
	$$
	where $A \geq 0$ is a constant, $\operatorname{Ric}^M$ is the Ricci curvature of $M$ and $L_V$ is the Lie derivative. Let $\left(N^n, h\right)$ be a complete Riemannian manifold with sectional curvature bounded above by a
 negative constant $-\kappa^2(\kappa>0)$. Let $u: M \rightarrow N$ be a $V$-harmonic map such that $u(M) \subset B_c$, where $B_c$ is a horoball centered at $c(+\infty)$ with respect to a geodesic $c(t)$ parametrized by arc length. Suppose that $\|V\|_{L^{\infty}(M)}<+\infty$. If $A \geq \frac{\|V\|_{L \infty}^2}{m-1}$, then $u$ must be a constant map. If $A<\frac{\|V\|_{L \infty}^2}{m-1}$, then $\frac{e(u)}{(B \circ u)^2}$ is bounded by a constant depending only on $A, m, \kappa$ and $\|V\|_{L^{\infty}(M)}$.
\end{thm}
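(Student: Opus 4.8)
The plan is to combine the sub-equation satisfied by the composition of $u$ with a Busemann function of $N$ with the Bochner--Weitzenböck formula for the energy density of a $V$-harmonic map, and then to run a generalized maximum principle for the drift Laplacian $\Delta_V:=\Delta+\langle V,\nabla\,\cdot\,\rangle$. First I would fix the Busemann function $B$ of $N$ associated with the ray $c$, normalized so that the horoball $B_c$ is a sublevel set $\{B<c_0\}$ with $c_0<0$; then $|\nabla B|\equiv 1$, and the Hessian comparison theorem applied to $K_N\le-\kappa^2$ gives $\operatorname{Hess}B\ge\kappa\,(h-dB\otimes dB)$, its $C^2$-regularity being standard under pinched negative curvature (otherwise one argues with barriers). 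Set $\psi:=B\circ u$, so that $\psi<c_0<0$ on $M$; since $u$ is $V$-harmonic one has $\tau(u)+du(V)=0$, and for every $C^{2}$ function $F$ on $N$ the composition formula reads $\Delta_V(F\circ u)=\langle\operatorname{Hess}F,u^{*}h\rangle$. Applying this to $F=B$ yields
\[
\Delta_V\psi\ \ge\ \kappa\bigl(2e(u)-|\nabla\psi|^2\bigr)\ \ge\ 0 ,
\]
so $\psi$ is $V$-subharmonic and bounded above, and moreover $2e(u)\le|\nabla\psi|^2+\kappa^{-1}\Delta_V\psi$, an a priori control of the energy density by $\psi$.

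Next I would invoke the Weitzenböck formula for a $V$-harmonic map,
\[
\Delta_V e(u)=|\nabla du|^2+\langle\operatorname{Ric}_V,u^{*}h\rangle-\sum_{i,j}\bigl\langle R^{N}(du(e_i),du(e_j))du(e_j),du(e_i)\bigr\rangle ,
\]
which, using $\operatorname{Ric}_V\ge A$ and $K_N\le-\kappa^2$, gives $\Delta_V e(u)\ge|\nabla du|^2+2A\,e(u)+\kappa^{2}Q$ with $Q\ge0$ the non-conformality term, and into which one feeds the Kato inequality $|\nabla du|^2\ge|\nabla|du||^2$. Then I would study the quantity appearing in the statement, $w:=e(u)/\psi^{2}$. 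Combining the two displays via the product and quotient rules for $\Delta_V$, and --- this is where the dimension and the drift enter --- using the refined Bochner/Cauchy--Schwarz bound $|\operatorname{Hess}\psi|^{2}\ge\frac{1}{m-1}\bigl(\Delta\psi-\operatorname{Hess}\psi(\nu,\nu)\bigr)^{2}+\operatorname{Hess}\psi(\nu,\nu)^{2}$ with $\nu=\nabla\psi/|\nabla\psi|$, together with $\Delta\psi=\Delta_V\psi-\langle V,\nabla\psi\rangle$ and $|\langle V,\nabla\psi\rangle|\le\|V\|_{\infty}|\nabla\psi|$, one arrives (after absorbing the error terms) at a differential inequality of the shape
\[
\Delta_V w\ \ge\ c\Bigl(A-\frac{\|V\|_{\infty}^{2}}{m-1}\Bigr)w\ +\ (\text{nonnegative terms})\ -\ 2\,\langle\nabla w,\nabla\log(-\psi)\rangle ,
\]
for some universal $c>0$; the point is that its zeroth-order coefficient is nonnegative exactly when $A\ge\|V\|_{\infty}^{2}/(m-1)$.

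To conclude I would use that $\operatorname{Ric}_V\ge A\ge0$ together with $\|V\|_{\infty}<\infty$ makes the Omori--Yau maximum principle valid for $\Delta_V$ on $M$. One first checks that $w$ is bounded above, via an interior estimate for $e(u)$ on geodesic balls in the spirit of Choi and Schoen--Yau, combined with the a priori bound of the first paragraph and letting the radius tend to infinity. Then, along a maximizing sequence $\{x_k\}$ for $w$ one has $\nabla w(x_k)\to0$, so the gradient term is negligible and $-\psi$ stays bounded away from zero; the inequality above then forces $c\,(A-\|V\|_{\infty}^{2}/(m-1))\sup_M w\le0$. When $A\ge\|V\|_{\infty}^{2}/(m-1)$ this gives $\sup_M w\le0$, hence $w\equiv0$, i.e. $e(u)\equiv0$ and $u$ is a constant map. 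When $A<\|V\|_{\infty}^{2}/(m-1)$, the same inequality (keeping the negative zeroth-order term on the other side) gives $\sup_M w\le C$ with $C=C(A,m,\kappa,\|V\|_{\infty})$, which is precisely the asserted bound on $e(u)/(B\circ u)^{2}$.

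The step I expect to be the main obstacle is the derivation of the elliptic inequality in the second paragraph with the correct threshold: one must choose the right power of $-\psi$ in the auxiliary function and organize the Cauchy--Schwarz splittings so that the drift term $\langle V,\nabla\psi\rangle$, the Kato defect $|\nabla du|^{2}-|\nabla|du||^{2}$, and the cross term $\langle\nabla w,\nabla\psi\rangle$ are all absorbed, and so that $m-1$ and $\|V\|_{\infty}^{2}$ appear with exactly the constants that make $A=\|V\|_{\infty}^{2}/(m-1)$ the critical value. A secondary difficulty is the a priori boundedness of $w$ needed before the maximum principle can be applied, along with the (routine, under pinched negative curvature) regularity of the Busemann function.
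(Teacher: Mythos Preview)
The paper does not supply a proof of this statement: Theorem~\ref{thm3-2} is quoted from \cite[Theorem~12]{MR3263201} and invoked as a black box to derive the subsequent corollaries, so there is no in-paper argument against which to compare your proposal.

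Assessed on its own, your outline follows the expected template for such Liouville/gradient estimates (Busemann barrier on $N$, the $\Delta_V$-chain rule giving $\Delta_V\psi\ge\kappa(2e(u)-|\nabla\psi|^2)$, the Bochner--Weitzenb\"ock formula for $V$-harmonic maps, an auxiliary quotient $w=e(u)/\psi^2$, and an Omori--Yau step), and the first paragraph is essentially correct. The decisive step, however---the derivation of the elliptic inequality for $w$ with the precise zeroth-order coefficient $A-\|V\|_\infty^2/(m-1)$---is left schematic, as you yourself flag. In particular, the refined inequality $|\operatorname{Hess}\psi|^2\ge\frac{1}{m-1}(\Delta\psi-\operatorname{Hess}\psi(\nu,\nu))^2+\operatorname{Hess}\psi(\nu,\nu)^2$ is an identity for scalar functions on $M$, and it is not clear how it couples with the \emph{map} Bochner formula (which involves $|\nabla du|^2$, not $|\operatorname{Hess}\psi|^2$) to produce the factor $m-1$; in arguments of this type that constant typically enters instead through the Laplacian comparison for the distance function on $M$ under $\operatorname{Ric}_V\ge A$ and $|V|\le\|V\|_\infty$, which is what drives the cutoff or Omori--Yau step. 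The a~priori boundedness of $w$ that you need before applying the global maximum principle also requires an actual localization argument rather than an appeal to an unspecified interior estimate. So the proposal is a plausible roadmap, but the computation that pins down the threshold remains to be carried out.
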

Thus, using the above Theorems, we can infer
\begin{thm}Let u be in the same situations as Theorem \ref{thm3-1},
	If $ ((M,g),(N,h),V,u,\lambda) $ is a non-compact Ricci-harmonic  soliton with constant $S_2(f)$ and $ \lambda $ , then it  is non-shrinking Ricci soliton, i.e, $ \lambda\leq 0 $.
\end{thm}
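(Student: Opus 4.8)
The plan is to argue by contradiction: assume the soliton is shrinking, $\lambda>0$, and use \autoref{thm3-1} to force the map $u$ to be constant, thereby collapsing the Ricci--harmonic soliton to a genuine shrinking Ricci soliton, which is then ruled out by the rigidity available once $S_2(f)$ and $\lambda$ are assumed constant. (Here, as in \eqref{main1}, the potential $f$ is understood through $V=-\nabla f$, so that $\operatorname{Ric}_V=\operatorname{Ric}^M-\tfrac12 L_Vg=\operatorname{Ric}^M+\operatorname{Hess}f$.)

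First I would rewrite the first soliton equation (with $\rho=0$) as $\operatorname{Ric}_V=\lambda g+\alpha\,\nabla u\otimes\nabla u$. Since $\alpha>0$ and $\nabla u\otimes\nabla u\ge 0$ as a symmetric $2$-tensor, this gives $\operatorname{Ric}_V\ge\lambda g$. Under the assumption $\lambda>0$ we get $\operatorname{Ric}_V\ge\lambda g>0$, hence in particular $\operatorname{Ric}^M\ge\tfrac12 L_Vg$, which is exactly the borderline case $A=0$ of \autoref{thm3-1}(1). The second soliton equation $\tau_g u+\langle\nabla u,V\rangle=0$ is precisely the statement that $u$ is a $V$-harmonic map, and by hypothesis $u$ is in the situation of \autoref{thm3-1} (target with sectional curvature $\le\kappa$, image in a regular ball $B_R(p)$, and $V$ satisfying $\langle V,\nabla r\rangle\le v(r)$ with $v$ nondecreasing and $\lim_{r\to\infty}|v(r)|/r=0$). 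Therefore \autoref{thm3-1}(1) applies and forces $u$ to be a constant map.

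With $\nabla u\equiv 0$ the soliton equations degenerate to $\operatorname{Ric}^M-\tfrac12 L_Vg=\lambda g$, i.e.\ $(M,g,V)$ is a complete noncompact \emph{shrinking} Ricci soliton still carrying the constants $S_2(f)$ and $\lambda>0$. At this point I would invoke the rigidity: by the Cao--Zhou type quadratic growth estimate the potential $f$ is proper and $|\nabla f|$, $R$ have controlled growth, so the integral identities of the Lemma of Section~2 (with $\rho=0$) can be run over an exhaustion by geodesic balls. Arguing as in \autoref{thm2-2}(1), constancy of $S_2(f)$ and $\lambda$ forces, in the limit, $\int_M \operatorname{Ric}(\nabla f,\nabla f)=0$ and then $\Delta f\equiv 0$, so $f$ is constant and $\operatorname{Ric}^M=\lambda g>0$; by Bonnet--Myers $M$ is compact, contradicting the noncompactness hypothesis. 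Hence $\lambda\le 0$, and since $u$ is constant the soliton is a non-shrinking Ricci soliton.

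The main obstacle is precisely this last step: on a noncompact manifold the divergence-theorem manipulations behind \autoref{thm2-2} leave boundary integrals over $\partial B_r$, and one must show they vanish along a suitable sequence $r\to\infty$, which is exactly where the quadratic growth of $f$ for shrinking Ricci solitons must be used with care; alternatively one may short-circuit this by citing a compactness criterion for complete shrinking (Ricci--harmonic) solitons such as the one in \cite{MR4423983} and then applying \autoref{thm2-2}(1) directly.
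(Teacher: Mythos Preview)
Your argument follows essentially the same route as the paper's proof: assume $\lambda>0$, use \autoref{thm3-1} to force $u$ to be constant, reduce to a genuine shrinking Ricci soliton, and then obtain Einstein rigidity which, via Bonnet--Myers, contradicts noncompactness. Your ordering is in fact cleaner than the paper's: you place the contradiction hypothesis $\lambda>0$ \emph{before} invoking \autoref{thm3-1}, which is what actually secures $\operatorname{Ric}_V\ge\lambda g>0$ and hence the $A=0$ case; the paper states ``$u$ is constant'' first and only then writes ``by contradiction'', which obscures this dependence.

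The one substantive difference is the final rigidity step. The paper does not attempt to extend the Section~2 integral identities to the noncompact setting; it simply cites \cite[Theorem~1.2]{soliton} as a black box asserting that a shrinking Ricci soliton with constant $S_2(f)$ is Einstein, and the contradiction with noncompactness follows. You instead sketch a direct argument via Cao--Zhou growth estimates and cutoff exhaustions, and you correctly flag that the boundary terms from the divergence theorem (and the use of \autoref{lem2.2}, which is stated only for compact $M$) are the genuine obstacle. That honesty is well placed: your sketch does not fully close this gap, and the cleanest fix is exactly what the paper does---appeal to the existing rigidity theorem in \cite{soliton} rather than redo it. Your alternative suggestion of invoking a compactness criterion such as \cite{MR4423983} would also work in spirit, but is not the route the paper takes.
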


\begin{proof}
	By the assumption and Theroem \ref{thm3-1}, we know that $ u $ is constant. So, $ \alpha $-Ricci-harmonic  soliton is reduced to Ricci soliton. By contradiction, we know that   shrinking Ricci soliton  is Einstein manifold (cf. \cite[Theorem 1.2]{soliton}) . This is a contradiction.
\end{proof}
Similarly, by the similar argument as above  , we get
\begin{thm}Let u be in the same situations as  Theorem \ref{thm3-2},
	if $ ((M,g),(N,h),V,u,\lambda) $ is a compact  $ \alpha $-Ricci-harmonic  soliton with constant $S_2(f)$ and $ \lambda \geq \frac{\|V\|_{L \infty}^2}{m-1} $, then it   is non-shrinking Ricci soliton, i.e, $ \lambda\leq 0 $.
\end{thm}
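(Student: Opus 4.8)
The plan is to reduce the assertion to the corresponding statement for a genuine Ricci soliton by first showing that $u$ is a constant map, exactly as in the proof of the theorem immediately preceding this one but invoking \autoref{thm3-2} in place of \autoref{thm3-1}. Setting $\rho=0$, the first soliton equation reads $\operatorname{Ric}-\alpha\,\nabla u\otimes\nabla u-\tfrac12 L_V g=\lambda g$. Since $\alpha>0$ and $\nabla u\otimes\nabla u$ is positive semidefinite (its value on $(X,X)$ is $|du(X)|_h^2\ge0$), we obtain
\[
\operatorname{Ric}_V:=\operatorname{Ric}-\tfrac12 L_V g=\lambda g+\alpha\,\nabla u\otimes\nabla u\ \ge\ \lambda\,g .
\]
Thus one may take the constant $A=\lambda$ in \autoref{thm3-2}: the requirement $A\ge0$ holds because $\lambda\ge\|V\|_{L^\infty}^2/(m-1)\ge0$, the inequality $A\ge\|V\|_{L^\infty}^2/(m-1)$ is precisely the hypothesis on $\lambda$, and $\|V\|_{L^\infty(M)}<\infty$ is automatic on a compact $M$. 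The target-side hypotheses ($N$ of sectional curvature $\le-\kappa^2$, $u(M)$ contained in a horoball) are the ones carried over by the phrase ``$u$ in the same situations as \autoref{thm3-2}''. Hence \autoref{thm3-2} applies and forces $u$ to be constant.

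With $u$ constant we have $\nabla u\equiv0$, so the soliton equation degenerates to $\operatorname{Ric}-\tfrac12 L_V g=\lambda g$; that is, $(M,g,V,\lambda)$ is an honest Ricci soliton (gradient when $V=-\nabla f$) with the \emph{same} $\lambda$ and still carrying the constraint that $S_2(f)$ is constant. Now argue by contradiction: suppose $\lambda>0$, i.e. the soliton is shrinking. Running the argument of \autoref{thm2-2} together with \autoref{lem2.2} in this reduced setting — constancy of $S_2(f)$ forces $S_2(f)\equiv0$, hence $\int_M\operatorname{Ric}(\nabla f,\nabla f)=0$ — one is led to conclude that the soliton must be trivial, so $(M,g)$ is Einstein with $\operatorname{Ric}=\lambda g$ and $\lambda>0$; this conflicts with the rigidity/triviality statement for shrinking Ricci solitons recorded in \cite[Theorem 1.2]{soliton}, in the same manner as in the proof of the previous theorem. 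The contradiction rules out $\lambda>0$, so $\lambda\le0$ and the reduced Ricci soliton is non-shrinking.

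The step I expect to be the real obstacle is the last one, i.e. making ``this is a contradiction'' precise. It requires the gradient hypothesis $V=-\nabla f$ so that the $S_2(f)$-machinery and \autoref{lem2.2} are available, and it requires \cite[Theorem 1.2]{soliton} to be stated in a form strong enough that a shrinking Ricci soliton with the induced extra constraint cannot occur as a nontrivial soliton. One should also reconcile the ambient category: \autoref{thm3-2} is formulated for complete \emph{noncompact} $M$, whereas the statement above says ``compact'' (presumably a slip, since the analogous earlier theorem is stated in the noncompact category), and in whichever category one works, both pieces — $u\equiv\mathrm{const}$ from \autoref{thm3-2}, and the triviality of the resulting shrinking Ricci soliton — must be available there. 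Once these quantifiers are aligned, the remainder is a direct transcription of the argument already given for the corollary of \autoref{thm3-1}.
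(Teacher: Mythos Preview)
Your proposal is correct and follows exactly the route the paper intends: the paper's own ``proof'' of this theorem is merely the sentence ``Similarly, by the similar argument as above, we get'', i.e.\ apply \autoref{thm3-2} to force $u$ constant, reduce to a genuine Ricci soliton, and then contradict shrinking via \cite[Theorem 1.2]{soliton}. Your added verification that $\operatorname{Ric}_V\ge\lambda$ and your caveats about the compact/noncompact mismatch and the implicit gradient hypothesis $V=-\nabla f$ are not deviations from the paper's argument but rather gaps in the paper's own presentation that you have correctly identified.
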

Recall that 
\begin{thm}[cf. \cite{MR3573121}]
Let $\left( (M^m, g), (N^n, h), f, u,\lambda\right)$ be a shrinking or steady gradient Ricci-Harmonic soliton satisfying  with $\alpha>0$. If in addition the sectional curvature Sect ${ }^N$ of $N$ satisfies $K=\sup _N \operatorname{Sect}^N<\frac{\alpha}{m}$, then $u$ is a constant map.
\end{thm}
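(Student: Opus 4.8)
The plan is to combine the Bochner (Weitzenb\"ock) formula for $V$-harmonic maps (the circle of ideas behind \autoref{thm3-1}) with the two equations in \eqref{main1}, taken with $\rho=0$, and then read off triviality from a maximum principle. First, observe that the second equation in \eqref{main1}, $\tau_g u-\langle\nabla u,\nabla f\rangle=0$, says precisely that $u$ is a $V$-harmonic map with $V=-\nabla f$; equivalently $\tau_f u:=\tau_g u-du(\nabla f)=0$. Write $\Delta_f=\Delta-\langle\nabla f,\nabla\,\cdot\,\rangle$ for the drift Laplacian and $\mathrm{Ric}_f:=\mathrm{Rc}+\operatorname{Hess}f$ for the corresponding Bakry--\'Emery tensor. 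The weighted Bochner formula for the energy density $e(u)=\tfrac12|\nabla u|^2$ of an $f$-harmonic map then reads
\[
\Delta_f e(u)=|\nabla du|^2+\langle\mathrm{Ric}_f,\nabla u\otimes\nabla u\rangle-\sum_{i,j}\langle R^{N}(du(e_i),du(e_j))du(e_j),du(e_i)\rangle,
\]
where $\nabla u\otimes\nabla u$ is the pulled-back metric $u^{*}h$ (so that $\operatorname{tr}_g(\nabla u\otimes\nabla u)=|\nabla u|^2$ and $|\nabla u\otimes\nabla u|^2$ is its squared Hilbert--Schmidt norm), and the term $\langle\nabla\tau_f u,du\rangle$ is absent since $\tau_f u=0$.

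Second, I would substitute the first equation in \eqref{main1} with $\rho=0$, rewritten as $\mathrm{Ric}_f=\lambda g+\alpha\,\nabla u\otimes\nabla u$, which gives $\langle\mathrm{Ric}_f,\nabla u\otimes\nabla u\rangle=\lambda|\nabla u|^2+\alpha|\nabla u\otimes\nabla u|^2$; and for the target term the bound $\operatorname{Sect}^{N}\le K$ gives $\sum_{i,j}\langle R^{N}(du(e_i),du(e_j))du(e_j),du(e_i)\rangle\le K\big(|\nabla u|^4-|\nabla u\otimes\nabla u|^2\big)$. Hence
\[
\Delta_f e(u)\ \ge\ |\nabla du|^2+\lambda|\nabla u|^2+(\alpha+K)|\nabla u\otimes\nabla u|^2-K|\nabla u|^4.
\]
Since $\nabla u\otimes\nabla u\ge0$, one has $\tfrac1m|\nabla u|^4\le|\nabla u\otimes\nabla u|^2\le|\nabla u|^4$, and a short case check ($K\le0$ versus $0<K<\alpha/m$) shows that the hypothesis $K<\tfrac{\alpha}{m}$ forces $(\alpha+K)|\nabla u\otimes\nabla u|^2-K|\nabla u|^4\ge c\,|\nabla u|^4$ with $c=\tfrac1m(\alpha-(m-1)K^{+})>0$, $K^{+}:=\max(K,0)$. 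As the soliton is shrinking or steady, $\lambda\ge0$, so discarding the nonnegative terms $|\nabla du|^2$ and $\lambda|\nabla u|^2$ yields
\[
\Delta_f e(u)\ \ge\ 2c\,e(u)^2\ \ge\ 0.
\]

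Finally I would apply a maximum principle for $\Delta_f$ relative to the weighted measure $e^{-f}\,dV_g$. If $M$ is compact, integrating the last inequality and using $\int_M\Delta_f\varphi\,e^{-f}\,dV_g=0$ immediately gives $e(u)\equiv0$, hence $u$ is constant on the connected manifold $M$ (and the discarded terms further force $\nabla du\equiv0$). If $M$ is complete noncompact, I would instead test $\Delta_f e(u)\ge2c\,e(u)^2$ against $\varphi^2e^{-f}$ with a logarithmic cut-off $\varphi$, obtaining $\int_M\varphi^2 e(u)^2 e^{-f}\le C\int_M|\nabla\varphi|^2 e^{-f}$, and let $\varphi$ exhaust $M$, using that here $\mathrm{Ric}_f=\lambda g+\alpha\,\nabla u\otimes\nabla u\ge0$ controls the weighted volume growth (and, for shrinkers, makes $\int_M e^{-f}\,dV_g$ finite). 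I expect this last point to be the main obstacle: making the weighted maximum principle/cut-off argument go through in the complete noncompact case without an a priori bound on $e(u)$. The algebraic heart of the argument, the competition between the good term $\alpha|\nabla u\otimes\nabla u|^2$ and the possibly bad target-curvature term, is exactly where $K<\alpha/m$ enters and is otherwise routine.
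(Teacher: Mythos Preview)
The paper does not supply a proof of this theorem: it is quoted verbatim from \cite{MR3573121} and used only as input to the subsequent corollary. So there is no ``paper's own proof'' to compare against.

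That said, your argument is the expected one and is essentially how the cited reference proceeds: apply the weighted Bochner formula to the $f$-harmonic map $u$, feed in the soliton identity $\operatorname{Ric}_f=\lambda g+\alpha\,\nabla u\otimes\nabla u$, bound the target curvature term via $\operatorname{Sect}^N\le K$, and use the elementary trace inequality $\tfrac1m|\nabla u|^4\le|\nabla u\otimes\nabla u|^2$ together with $K<\alpha/m$ to obtain $\Delta_f e(u)\ge c\,e(u)^2$ with $c>0$. Your algebra is correct; the constant in front of $e(u)^2$ should be $4c$ rather than $2c$, but this is immaterial. In the compact case the conclusion follows immediately by integrating against $e^{-f}\,dV_g$, exactly as you say.

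You are right that the complete noncompact case is where the care is needed. The cutoff argument you sketch is not quite self-contained as written, because after integrating by parts you still have a term involving $\nabla e(u)$ which is not directly controlled. The standard way to close this in the cited literature is to invoke a weak (Omori--Yau type) maximum principle for $\Delta_f$, which is available here since $\operatorname{Ric}_f=\lambda g+\alpha\,\nabla u\otimes\nabla u\ge 0$: for a nonnegative function $v$ satisfying $\Delta_f v\ge c\,v^2$, the weak maximum principle forces $\sup v=0$. For shrinkers one can alternatively use the known quadratic growth of $f$ (hence Gaussian decay of $e^{-f}$) to justify the integration by parts and run a Moser-type iteration; for steady solitons one falls back on the maximum principle at infinity. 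Either route completes the argument, so your plan is sound once this step is made precise.
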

Combining with Theorem \ref{thm2-2} gives that
\begin{cor}
	Let $\left( (M^m, g), (N^n, h), f, u,\lambda\right)$ be a  compact shrinking or steady  gradient Ricci-Harmonic soliton  with $\alpha>0$, constant $S_2(f)$ and $ \lambda $. If in addition the sectional curvature Sect ${ }^N$ of $N$ satisfies $K=\sup _N \operatorname{Sect}^N<\frac{\alpha}{m}$, then $ (M,g) $ must be Einstein manifold.
\end{cor}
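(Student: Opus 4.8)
The plan is to deduce this corollary by chaining three ingredients that are already available: the map-rigidity theorem of \cite{MR3573121} quoted just above, Theorem \ref{thm2-2}(1), and the elementary observation (recorded right after the definition of triviality) that a gradient Ricci-harmonic soliton with both $u$ and $f$ constant is Einstein.

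\emph{Step 1 (the harmonic map is constant).} Since $\left((M^m,g),(N^n,h),f,u,\lambda\right)$ is a compact shrinking or steady gradient Ricci-harmonic soliton with $\alpha>0$ and the target satisfies $K=\sup_N\operatorname{Sect}^N<\frac{\alpha}{m}$, the theorem of \cite{MR3573121} applies verbatim and yields that $u$ is a constant map. \emph{Step 2 (the potential is constant).} Once $u$ is constant the term $\alpha\,\nabla u\otimes\nabla u$ drops out, so the system \eqref{main1} (with $\rho=0$) reduces to $Rc+\operatorname{Hess}f=\lambda g$ and $\tau_g u=0$; in other words the data is still a compact gradient Ricci-harmonic soliton, now with constant $S_2(f)$ and constant $\lambda$ by hypothesis. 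Theorem \ref{thm2-2}(1) then forces the soliton to be trivial, i.e.\ $f$ is constant. \emph{Step 3 (conclusion).} With $u$ and $f$ both constant, \eqref{main1} becomes $Rc=\lambda g$, so $(M,g)$ is an Einstein manifold, which is the assertion.

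The main thing to watch is not analytic — all the hard analysis is encapsulated in \cite{MR3573121} and in the proof of Theorem \ref{thm2-2} — but the bookkeeping: one must check that ``$u$ constant'' genuinely places the soliton inside the hypotheses of Theorem \ref{thm2-2}(1) (it does, since a gradient Ricci soliton is precisely the $u\equiv\mathrm{const}$ case of a gradient Ricci-harmonic soliton), and that the sign convention for ``shrinking or steady'' used in \cite{MR3573121} matches the present normalization $\lambda\geq 0$, so that $K<\alpha/m$ is indeed the correct curvature threshold to invoke. Modulo these compatibility checks, the corollary follows immediately by concatenating Steps 1--3.
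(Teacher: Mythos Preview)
Your proposal is correct and matches the paper's own argument, which simply says ``Combining with Theorem~\ref{thm2-2} gives that'' --- you have accurately unpacked this into the three steps (constancy of $u$ via \cite{MR3573121}, constancy of $f$ via Theorem~\ref{thm2-2}(1), and then the Einstein conclusion from \eqref{main1}). One small streamlining: Theorem~\ref{thm2-2}(1) already applies directly to the original Ricci-harmonic soliton without first reducing $u$ to a constant, so Steps~1 and~2 are logically independent rather than sequential; only Step~3 requires both.
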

\bibliographystyle{acm}
\bibliography{E:/myonlybib/myonlymathscinetbibfrom2023,E:/myonlybib/low-quality-bib-to-publish}
\end{document}